\tikzset{
  symbol/.style={
    draw=none,
    every to/.append style={
      edge node={node [sloped, allow upside down, auto=false]{$#1$}}}
  }
}
\newtheorem{definition}{Definition}[section]
\newtheorem{theorem}[definition]{Theorem}
\newtheorem{lemma}[definition]{Lemma}
\newtheorem{corollary}[definition]{Corollary}
\theoremstyle{definition}
\newcommand{\C}{\mathbb{C}}
\newcommand{\cl}[1]{\mathcal{#1}}
\newcommand\tr{ \operatorname{tr} }
\newcommand{\lm}{\lambda}
\newcommand{\la}{\langle}
\newcommand{\ra}{\rangle}
\newcommand{\Ltau}{\Lambda_{\tau}}
\title[Orthogonal Unitary Bases and a Subfactor Conjecture]{Orthogonal Unitary Bases and a Subfactor Conjecture}
\begin{document}

\author[J.~Crann, D.~W. Kribs, R. Pereira ]{Jason Crann$^{1}$, David~W.~Kribs$^{2}$, Rajesh Pereira$^{2}$}

\address{$^1$School of Mathematics \& Statistics, Carleton University, Ottawa, ON, Canada H1S 5B6}
\email{jasoncrann@cunet.carleton.ca}
\address{$^2$Department of Mathematics \& Statistics, University of Guelph, Guelph, ON, Canada N1G 2W1}
\email{dkribs@uoguelph.ca}
\email{pereirar@uoguelph.ca}

\subjclass[2010]{46L37, 15B10, 81P45}

\keywords{unitary operators, orthonormal basis, von Neumann algebra, subfactors.}


\begin{abstract}
We show that any finite dimensional von Neumann algebra admits an orthonormal unitary basis with respect to its standard trace. We also show that a finite dimensional von Neumann subalgebra of $M_n(\C)$ admits an orthonormal unitary basis under normalized matrix trace if and only if the normalized matrix trace and standard trace of the von Neumann subalgebra coincide. As an application, we verify a recent conjecture of Bakshi-Gupta \cite{bakshi2021few}, showing that any finite-index regular inclusion $N\subseteq M$ of $II_1$-factors admits an orthonormal unitary Pimsner-Popa basis.
\end{abstract}

\maketitle

\section{Introduction}

Orthonormal bases of unitaries arise naturally within operator algebras and their applications. In matrix algebras, such bases are known as unitary error bases in quantum information, and underly the structure of quantum teleportation protocols (see \cite{w01} and the references therein). In subfactor theory, orthonormal unitary Pimsner-Popa bases arise naturally for a large class of inclusions $N\subseteq M$ of $II_1$-factors (see, e.g., \cite{bg20,bakshi2021few,cs06}).

Motivated (in part) by a question of Popa \cite[\S 3.5]{p19}, Bakshi and Gupta recently established sufficient conditions under which a finite-index inclusion $N\subseteq M$ of $II_1$-factors admits an orthonormal unitary Pimsner-Popa basis \cite[Theorem 3.21]{bakshi2021few}. They conjectured that any finite-index \textit{regular} inclusion of $II_1$-factors admits such a basis \cite[Conjecture 3.20]{bakshi2021few}. In this note, we answer their conjecture in the affirmative.

Our approach largely follows that of Bakshi and Gupta, but our main contribution, which draws on techniques and tools from \cite{pereira2004trace}, gives a construction of an orthonormal basis (over $\mathbb{C}$) for any finite-dimensional von Neumann algebra relative to its standard/Markov trace.

\section{Main Result}

We define the normalized trace on the set of $d\times d$ complex matrices $M_d(\C)$ as $\tau(A)=\frac{1}{d}\tr(A)=\frac{1}{d}\sum_{i=1}^d a_{ii}$, where $A = (a_{ij})$.  A finite dimensional von Neumann algebra $M=\bigoplus_{i=1}^m I_{k_i}\otimes M_{n_i}(\C)\subseteq M_d(\C)$, where $d=\sum_{i=1}^mk_in_i$, inherits its normalized trace from $M_d(\C)$.  Since any unitary element is automatically norm one under the normalized trace inner product norm, we will often omit the normalization factor in our calculations since we only need verify the orthogonality of unitary elements.

We state our first result as follows:

\begin{theorem}\label{mainthm} Let $\{n_i\}_{i=1}^m$ be a set of natural numbers.  Let
\[
M=\bigoplus_{i=1}^m I_{n_i}\otimes M_{n_i}(\C).
\]
Then there exists a basis of $M$ consisting entirely of unitary matrices which are orthogonal under the normalized trace inner product.\end{theorem}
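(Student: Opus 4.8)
The plan is to reduce the statement to a linear-algebraic \emph{Gram matrix} condition, to prescribe convenient targets that satisfy it for free, and then to realize the resulting overlaps by genuine unitaries, the last step being the crux.

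First I would record the shape of a unitary in $M$. Since $M=\bigoplus_i I_{n_i}\otimes M_{n_i}(\C)$, every unitary is $U=\bigoplus_i I_{n_i}\otimes u_i$ with $u_i\in U(n_i)$, and for two such unitaries $U,V$ (with blocks $u_i,v_i$) the unnormalized trace inner product is $\sum_i n_i\tr(v_i^*u_i)$. Given a candidate family $U^{(1)},\dots,U^{(d)}$, introduce the per-block correlation matrices $G^{(i)}\in M_d(\C)$ by $G^{(i)}_{rr'}=\tfrac1{n_i}\tr(u_i^{(r')*}u_i^{(r)})$. Each $G^{(i)}$ is positive semidefinite with unit diagonal and $\mathrm{rank}\,G^{(i)}\le n_i^2$, and a direct computation (using $d=\sum_i n_i^2$) shows that the $U^{(r)}$ are orthonormal for the normalized trace if and only if
\[
\sum_{i=1}^m n_i^2\,G^{(i)}=d\,I_d .
\]
Since there are exactly $d=\dim_\C M$ of them, orthonormality upgrades automatically to a basis.

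Next I would \emph{prescribe} targets $G^{(i)}$ making the identity above hold for free. Choose an orthogonal resolution $I_d=\sum_i P_i$ with $\mathrm{rank}\,P_i=n_i^2$ and constant diagonal $(P_i)_{rr}\equiv n_i^2/d$; concretely, partition the $d$ columns of a flat unitary (a DFT or complex Hadamard matrix) into blocks of sizes $n_1^2,\dots,n_m^2$ and let $P_i$ be the projection onto the $i$-th block. Setting $G^{(i)}:=\tfrac{d}{n_i^2}P_i$ produces positive semidefinite matrices of unit diagonal satisfying $\sum_i n_i^2G^{(i)}=d\sum_iP_i=dI_d$. Thus the entire problem collapses to the single realizability question below, treated block by block.

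The main obstacle is exactly this last step: realize each prescribed matrix $G^{(i)}=\tfrac{d}{n_i^2}P_i$ as the Gram matrix of $d$ actual unitaries $u_i^{(1)},\dots,u_i^{(d)}\in U(n_i)$ for the normalized trace, i.e.\ place $d$ points carrying the prescribed overlaps on the image of $U(n_i)$ inside $L^2(M_{n_i}(\C))\cong\C^{n_i^2}$. This is genuinely nonlinear: the image of $U(n_i)$ is a proper real subvariety of its $L^2$-sphere, so a prescribed tight-frame Gram matrix need not be carried by unitaries -- for instance a uniform-phase combination of a Weyl--Heisenberg error basis is typically not unitary, so the naive harmonic frame fails. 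Here I would invoke and adapt the trace-vector techniques of \cite{pereira2004trace}, exploiting the residual freedom -- the global $U(n_i^2)$ rotation that preserves $G^{(i)}$ together with the choice of the projections $P_i$ -- to steer all $d$ frame vectors onto the unitary variety. As a guiding instance, for $M=\C\oplus M_2(\C)$ (so $d=5$) one has $G^{(1)}=J$, realized by the constant family $u_1^{(r)}=1$, while $G^{(2)}=\tfrac14(5I_5-J)$ is realized by the five vertices of a regular $4$-simplex inscribed in $SU(2)\cong S^3$; the assembled unitaries $(1,V_r)$ then form the desired orthonormal basis. Establishing such a realization uniformly in the $n_i$ is the technical heart of the argument.
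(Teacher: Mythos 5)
Your reduction to the Gram-matrix condition $\sum_{i=1}^m n_i^2 G^{(i)}=d\,I_d$ is correct, and your prescribed targets $G^{(i)}=\tfrac{d}{n_i^2}P_i$ do satisfy it, but the argument stops exactly where you say it does, and that final step is not a technicality: it is essentially the whole theorem. A positive semidefinite matrix with unit diagonal and rank at most $n_i^2$ is always the Gram matrix of $d$ unit vectors in $M_{n_i}(\C)$, but for $n_i\geq 3$ the normalized unitaries $\tfrac{1}{\sqrt{n_i}}\,\mathrm{U}(n_i)$ form a real subvariety of dimension $n_i^2$ inside the unit sphere of real dimension $2n_i^2-1$, and nothing in the residual freedom you invoke (the global rotation preserving $G^{(i)}$, or the choice of the $P_i$) guarantees that an entire frame of $d$ vectors can be steered onto it simultaneously; you would have to solve a coupled system of real-algebraic constraints, and ``adapt the techniques of \cite{pereira2004trace}'' is a pointer rather than an argument. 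Your $\C\oplus M_2(\C)$ example succeeds only because of an accident of low rank: $\mathrm{SU}(2)$ is the \emph{entire} unit sphere of the totally real span of $\{I,i\sigma_x,i\sigma_y,i\sigma_z\}$, so the real simplex Gram matrix is automatically realizable; no analogue of this holds for $n_i\geq 3$. As written, the proposal establishes the theorem only in cases where such ad hoc realizations can be exhibited.

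The paper's proof avoids the realizability question entirely by writing the basis down. It sets $d=\sum_i n_i^2$, $\omega=e^{2\pi i/d}$, $s_i=\sum_{j<i}n_j^2$, and takes $U=\bigoplus_i \omega^{s_i}I_{n_i}\otimes D_i$ with $D_i=\mathrm{diag}(1,\omega^{n_i},\dots,\omega^{(n_i-1)n_i})$ and $V=\bigoplus_i I_{n_i}\otimes C_i$ with $C_i$ circulant of spectrum $\{1,\omega,\dots,\omega^{n_i-1}\}$; then $\tr(U^rV^r)=\sum_i\omega^{rs_i}\tr(D_i^r)\tr(C_i^r)$ (using that circulants have constant diagonal), and two geometric sums telescope to $\frac{\omega^{rd}-1}{\omega^r-1}=0$ whenever $d\nmid r$, so $\{U^kV^k\}_{k=0}^{d-1}$ is the desired orthogonal unitary basis. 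In effect this exhibits one explicit choice of your projections $P_i$ (cut out of the size-$d$ DFT matrix) together with an explicit unitary realization; to complete your route you would need to prove such a realization exists for general $n_i$, and the diagonal/circulant pair is the natural witness to reverse-engineer.
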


We note that the $m=1$ special case of this result is well known, see the proof of \cite[Proposition 3.23]{bakshi2021few} for instance.

In preparation for the proof below, we let $F_n$ be the $n \times n$ Fourier matrix (also called the DFT matrix in \cite{bakshi2021few}), given as follows:
\[
F_n = \frac{1}{\sqrt{n}}
\left[
\begin{matrix}
1 & 1 & \cdots & 1 \\
1 & \omega & \cdots & \omega^{n-1} \\
\vdots & \vdots & \vdots & \vdots \\
1  & \omega^{n-1} & \cdots & \omega^{(n-1)(n-1)}
\end{matrix}
\right],
\]
with $\omega=e^\frac{2\pi i}{n}$.
Fourier matrices are both unitary matrices and Vandermonde matrices, and they also appear in the spectral decomposition of circulant matrices \cite{davis2013circulant,kra2012circulant}.  (Recall that an $n \times n$ matrix $C$ is said to be circulant if  $c_{ij}=c_{kl}$ whenever $j-i=l-k\mod n$).    A matrix $C$ is circulant if and only if it is of the form $C=F_nDF_n^*$ for some diagonal matrix $D$ \cite[Theorem 5.8]{zhang2011matrix}.  From this, it follows that the circulant matrices form a $*$-algebra.  It also follows that any set of $n$ complex numbers of modulus one is the spectrum of an $n \times n$ unitary circulant matrix. Additionally, a feature that we shall use is that the main diagonal entries of a $n\times n$ circulant matrix $C$ are all equal to $\frac1n \tr(C)$.

With these facts in mind we can restate Theorem \ref{mainthm} in a more constructive fashion.

\begin{theorem}\label{mainthm2} Let $\{n_i\}_{i=1}^m$ be a set of natural numbers and let $d=\sum_{i=1}^m n_i^2$.  Let $M=\bigoplus_{i=1}^m I_{n_i}\otimes M_{n_i}(\mathbb{C})$.  Then there exists two unitaries $U,V\in M$ such that $\{ U^kV^k \}_{k=0}^{d-1}$ is an orthogonal basis of $M$ under the trace inner product. \end{theorem}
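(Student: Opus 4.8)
The plan is to first reduce the orthogonality of the family $\{U^kV^k\}$ to a single sequence of trace conditions, and then to build $U,V$ block-diagonally from a diagonal unitary and a circulant unitary in each summand.

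First I would exploit cyclicity of the trace. For unitaries $U,V$ one has
\[
\langle U^kV^k, U^lV^l\rangle = \tr(V^{-l}U^{k-l}V^{k}) = \tr(U^{k-l}V^{k-l}),
\]
so the Gram matrix of the family depends only on $k-l$. Hence $\{U^kV^k\}_{k=0}^{d-1}$ is orthogonal (and therefore, being $d$ nonzero orthogonal vectors in the $d$-dimensional algebra $M$, a basis) if and only if $\tr(U^jV^j)=0$ for every $j=1,\dots,d-1$.

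Next I would choose $U=\bigoplus_i I_{n_i}\otimes D_i$ and $V=\bigoplus_i I_{n_i}\otimes C_i$, with each $D_i\in M_{n_i}(\C)$ a diagonal unitary and each $C_i\in M_{n_i}(\C)$ a circulant unitary. The point of this choice is the product formula it yields: since circulants form a $*$-algebra, $C_i^j$ is again circulant, so its diagonal is the constant $\tfrac1{n_i}\tr(C_i^j)$, and therefore
\[
\tr(U^jV^j)=\sum_{i=1}^m n_i\,\tr(D_i^jC_i^j)=\sum_{i=1}^m n_i\cdot\tfrac1{n_i}\tr(C_i^j)\,\tr(D_i^j)=\sum_{i=1}^m \tr(D_i^j)\,\tr(C_i^j).
\]
Crucially the multiplicity $n_i$ cancels the circulant factor $\tfrac1{n_i}$ --- this is exactly where the balanced form $I_{n_i}\otimes M_{n_i}$ is used. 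Writing the eigenvalues of $D_i$ as $\alpha_{i,s}$ and of $C_i$ as $\beta_{i,t}$, this becomes $\tr(U^jV^j)=\sum_{i,s,t}(\alpha_{i,s}\beta_{i,t})^j$, a power sum of exactly $d=\sum_i n_i^2$ points on the unit circle.

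The remaining step --- the one I expect to carry the real content --- is to arrange these $d$ products to be the $d$-th roots of unity, since a power sum $\sum_\ell\gamma_\ell^{\,j}$ of $d$ unimodular numbers vanishes for all $j=1,\dots,d-1$ precisely when the $\gamma_\ell$ are equally spaced on the circle. I would realize this by a base-$n_i$ tiling of $\mathbb{Z}_d$: with $\omega=e^{2\pi i/d}$ and $d_{i-1}=\sum_{r<i}n_r^2$, set $\alpha_{i,s}=\omega^s$ for $0\le s\le n_i-1$ and prescribe the circulant spectrum $\beta_{i,t}=\omega^{\,d_{i-1}+t n_i}$ for $0\le t\le n_i-1$ (admissible since any $n_i$ unimodular numbers form a circulant spectrum). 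Then the exponents $s+(d_{i-1}+t n_i)$ run over $\{d_{i-1},\dots,d_i-1\}$ within block $i$ and over all of $\{0,\dots,d-1\}$ across blocks, so $\{\alpha_{i,s}\beta_{i,t}\}$ is exactly the full set of $d$-th roots of unity and $\tr(U^jV^j)=\sum_{\ell=0}^{d-1}\omega^{\ell j}=0$ for $1\le j\le d-1$. Thus the main obstacle is purely combinatorial --- fitting the per-block product sets together so as to tile $\mathbb{Z}_d$ --- and the base-$n_i$ representation resolves it cleanly.
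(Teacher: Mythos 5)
Your proof is correct and follows essentially the same route as the paper: block-diagonal $U$ and block-circulant $V$, the identity $\tr(U^jV^j)=\sum_i\tr(D_i^j)\tr(C_i^j)$ via the constant diagonal of a circulant, and eigenvalue exponents $s+d_{i-1}+tn_i$ tiling $\mathbb{Z}_d$ (the paper puts the offset $\omega^{s_i}$ and the $n_i$-spaced eigenvalues on the diagonal factor and the consecutive powers on the circulant factor, but that is only a cosmetic swap). The paper finishes with a telescoping geometric-series computation where you instead observe directly that the $d$ products are exactly the $d$-th roots of unity; these are the same calculation.
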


\begin{proof}
Let $d=\sum_{i=1}^m n_i^2$ and set $s_1=0$ and $s_i = \sum_{j=1}^{i-1} n_j^2$ for $1 < i \leq m$. Let $\omega=e^\frac{2\pi i}{d}$, and now define
\[
U= \bigoplus_{i=1}^m \omega^{s_i} I_{n_i}\otimes D_i ,
\]
where $D_i=\mathrm{diag}\,(1,\omega^{n_i}, \omega^{2n_i},....,\omega^{(n_i-1)n_i})$, and let
\[
V=\bigoplus_{i=1}^m I_{n_i}\otimes C_i ,
\]
where $C_i$ is a circulant matrix with eigenvalues $1,\omega, \omega^{2},....,\omega^{n_i-1}$.
Then
\[
\langle U^jV^j , U^kV^k\rangle= \tr(U^jV^j(U^kV^k)^*)=\tr(U^{j-k}V^{j-k}).
\]
We therefore need to show that the right hand side of this equation is zero whenever $j-k\not \equiv 0 \mod d$.

Let $r$ be any integer not divisible by $d$. Then observe that
$$
\tr(U^rV^r)=\sum_{i=1}^m \omega^{r s_i} n_i \tr(D_i^rC_i^r)=\sum_{i=1}^m \omega^{r s_i}\tr(D_i^r)\tr(C_i^r),
$$
where the last equality follows from the fact that $D_i^r$ is a diagonal matrix and every entry on the main diagonal of the circulant matrix $C_i^r$ is equal to $\frac{1}{n_i} tr(C_i^r)$. By choice of $r$, we have $\omega^r \neq 1$.
Hence we have
\begin{eqnarray*}
\tr(U^rV^r) &=& \sum_{i=1}^m \omega^{r s_i}(1+\omega^{n_1r}+ \omega^{2n_ir}+....+\omega^{(n_i-1)n_ir}) \\
& & \times (1+\omega^r+ \omega^{2r}+....+\omega^{(n_i-1)r}) \\
&=& \sum_{i=1}^m \omega^{r s_i} (1+\omega^{n_1r}+ \omega^{2n_ir}+....+\omega^{(n_i-1)n_ir}) \Big(  \frac{\omega^{n_i r} - 1}{\omega^{r} - 1}  \Big) \\
&=& \frac{1}{\omega^r -1} \sum_{i=1}^m \omega^{r s_i} (\omega^{n_i^2 r} - 1)  \\
&=&  \frac{\omega^{d} - 1}{\omega^{r} - 1} \\
&=& 0,
\end{eqnarray*}
and the result follows.
\end{proof}

We remark that the unitaries $U$ and $V$ are diagonal and block circulant respectively.  The matrices $U$ and $V$ from \cite[Proposition 3.23]{bakshi2021few} are diagonal and circulant respectively but the $U$ and $V$ in this construction are different in the $m=1$ special case from that in \cite{bakshi2021few}.

We can now characterize which finite dimensional von Neumann algebras have an orthonormal basis (under the normalized trace inner product) of unitary elements.  We need the following lemma, which is a special case of one direction of \cite[Corollary 1.2]{pereira2006representing}

\begin{lemma} \label{lemme} Let $M$ be a finite dimensional von Neumann algebra.  If $\{ x_i\}_{i=1}^{n}$ and $\{y_i\}_{i=1}^n$ are two orthonormal bases of $M$ under any fixed inner product on $M$,  then $\sum_{i=1}^n x_i^*x_i=\sum_{i=1}^n y_i^*y_i$.\end{lemma}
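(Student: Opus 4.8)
The plan is to exploit the single structural fact that makes the statement work: the transition matrix between two orthonormal bases of a finite-dimensional inner product space is unitary, and the quantity $\sum_i x_i^* x_i$ is computed using the \emph{algebra} operations of $M$ (product and involution), which are insensitive to the choice of inner product. So the inner product enters only to guarantee unitarity of the change of basis, after which a direct computation collapses the double sum.

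Concretely, I would first set $u_{ij} = \langle y_j, x_i \rangle$ for the given fixed inner product, so that expanding each $y_j$ in the orthonormal basis $\{x_i\}$ gives $y_j = \sum_{i=1}^n u_{ij}\, x_i$. The condition $\langle y_j, y_l\rangle = \delta_{jl}$ then reads $\sum_i \overline{u_{il}}\, u_{ij} = \delta_{jl}$, i.e. $U^*U = I$ for the matrix $U = (u_{ij}) \in M_n(\C)$; since $U$ is square, it is unitary and $UU^* = I$ as well. Next I would substitute this expansion into $\sum_j y_j^* y_j$. Using that involution is conjugate-linear and multiplication is bilinear over $M$, we get
\[
\sum_{j=1}^n y_j^* y_j = \sum_{j=1}^n \Big(\sum_i u_{ij} x_i\Big)^* \Big(\sum_k u_{kj} x_k\Big) = \sum_{i,k} \Big( \sum_{j=1}^n \overline{u_{ij}}\, u_{kj} \Big) x_i^* x_k.
\]
The inner sum is exactly $(UU^*)_{ki} = \delta_{ki}$, so the right-hand side reduces to $\sum_{i} x_i^* x_i$, which is the claim.

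There is no serious obstacle here; the only point requiring care is conceptual rather than computational, namely keeping the two structures separate. The involution $x \mapsto x^*$ and the product in the displayed computation are the intrinsic $\cstar$-algebra operations on $M$, whereas the coefficients $u_{ij}$ and the unitarity relation $UU^*=I$ come from the fixed (and otherwise arbitrary) inner product. It is precisely because $\sum_j y_j^* y_j$ is defined purely algebraically that the final value $\sum_i x_i^* x_i$ is forced to agree for any two bases that the chosen inner product renders orthonormal. I would also remark that finite dimensionality is used only to ensure the bases are finite and the transition map is a genuine $n \times n$ unitary; no von Neumann algebraic hypothesis beyond the existence of the involution and product is needed for this lemma.
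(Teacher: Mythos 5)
Your proof is correct and is essentially the same argument as the paper's: both expand one orthonormal basis in terms of the other and collapse the resulting double sum using unitarity of the transition matrix (the paper phrases this step via the Parseval identity $\sum_i \overline{\langle x_i, y_j\rangle}\langle x_i, y_k\rangle = \langle y_j, y_k\rangle$, you via $UU^*=I$, which is the same fact). The only cosmetic difference is the direction of expansion, and your closing remark correctly identifies that the lemma needs nothing beyond the $*$-algebra structure and finite dimensionality.
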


\begin{proof} 
If $\{ x_i\}_{i=1}^{n}$ and $\{y_i\}_{i=1}^n$ are two orthonormal bases of $M$, then $x_i=\sum_{j=1}^n \langle x_i, y_j\rangle y_j$ for all $i$, and similarly for the $y_i$.  Hence

\begin{eqnarray*} 
\sum_{i=1}^n x_i^*x_i &=& \sum_{i=1}^n \sum_{j,k=1}^n \overline{\langle x_i, y_j\rangle}\langle x_i, y_k\rangle  y_j^*y_k\\ 
&=& \sum_{j,k=1}^n \Big( \sum_{i=1}^n\overline{\langle x_i, y_j\rangle}\langle x_i, y_k\rangle \Big) y_j^*y_k\\ 
&=& \sum_{j,k=1}^n\langle y_j, y_k\rangle  y_j^*y_k \\ 
&=&\sum_{i=1}^ny_i^*y_i.
\end{eqnarray*}
\end{proof}

We now prove our result.

\begin{theorem}\label{mainthm3} Let  $\{k_i\}_{i=1}^m$  and $\{n_i\}_{i=1}^m$ be two sets of natural numbers.  Let
\[
M=\bigoplus_{i=1}^m I_{k_i}\otimes M_{n_i}(\C).
\]
Then there exists a basis of $M$ consisting entirely of unitary matrices that are orthogonal under the normalized trace inner product if and only if there exists a constant $c$ such that $n_i=c k_i$ for all $1\le i\le m$.\end{theorem}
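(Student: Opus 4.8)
The plan is to prove both implications, writing $N = \dim_{\mathbb{C}} M = \sum_{i=1}^m n_i^2$ for the number of elements in any basis of $M$, and recalling that $d = \sum_{i=1}^m k_i n_i$ so that $\langle x, y\rangle = \frac{1}{d}\tr(xy^*)$. The starting point in both directions is the observation, already noted in the excerpt, that every unitary $u \in M$ satisfies $\langle u, u\rangle = \frac{1}{d}\tr(I_d) = 1$, so that any orthogonal unitary basis is automatically \emph{orthonormal} and Lemma \ref{lemme} applies to it directly.

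For the forward (``only if'') direction, suppose $\{u_\alpha\}_{\alpha=1}^N$ is an orthogonal, hence orthonormal, unitary basis of $M$. Since $u_\alpha^* u_\alpha = I_d$ for every $\alpha$, we get $\sum_{\alpha=1}^N u_\alpha^* u_\alpha = N I_d$. I would compare this against the standard normalized matrix-unit basis: for each block $i$ and $1 \le p,q \le n_i$, let $e^{(i)}_{pq} \in M_{n_i}(\C)$ be the matrix units and set $f^{(i)}_{pq} = \sqrt{d/k_i}\,(I_{k_i}\otimes e^{(i)}_{pq})$, embedded as zero on the other blocks. A short computation shows these are orthonormal and that $\sum_{i,p,q} (f^{(i)}_{pq})^* f^{(i)}_{pq} = \bigoplus_{i=1}^m \frac{d n_i}{k_i}\, I_{k_i}\otimes I_{n_i}$, since $(I_{k_i}\otimes e^{(i)}_{qp})(I_{k_i}\otimes e^{(i)}_{pq}) = I_{k_i}\otimes e^{(i)}_{qq}$ and the $q$-sum recovers $I_{n_i}$. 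Lemma \ref{lemme} forces this to equal $N I_d$, and comparing the two expressions block by block yields $\frac{d n_i}{k_i} = N$ for every $i$; hence $n_i = \frac{N}{d} k_i$, so $c = N/d$ works.

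For the reverse (``if'') direction, suppose $n_i = c k_i$ for all $i$. The key observation is that orthogonality is unaffected by rescaling an inner product by a positive constant. Let $M' = \bigoplus_{i=1}^m I_{n_i}\otimes M_{n_i}(\C) \subseteq M_{d'}(\C)$ with $d' = \sum_i n_i^2 = N$, and identify $M \cong M'$ via the $*$-isomorphism that replaces the amplification $I_{k_i}$ by $I_{n_i}$ in each block. For $x = \bigoplus_i I_{k_i}\otimes x_i$ and $y = \bigoplus_i I_{k_i}\otimes y_i$ in $M$, one computes $\langle x, y\rangle_M = \frac{1}{d} \sum_i k_i \tr(x_i y_i^*) = \frac{1}{cd}\sum_i n_i \tr(x_i y_i^*) = \frac{d'}{cd}\,\langle x, y\rangle_{M'}$, so the two normalized-trace inner products agree up to the positive scalar $d'/(cd)$. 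Since the $*$-isomorphism sends unitaries to unitaries, Theorem \ref{mainthm2} applied to $M'$ furnishes unitaries $U', V' \in M'$ with $\{(U')^j (V')^j\}_{j=0}^{d'-1}$ an orthogonal basis of $M'$; transporting back to $M$ produces $d' = N$ orthogonal unitary elements, which (being nonzero and mutually orthogonal, hence linearly independent, with cardinality equal to $\dim M$) form a basis of $M$.

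The bookkeeping in both directions is routine; the genuinely load-bearing steps are (a) recognizing that Lemma \ref{lemme}, applied to the unitary basis and the matrix-unit basis, pins down the ratios $n_i/k_i$, and (b) the scalar-rescaling observation that reduces the converse to Theorem \ref{mainthm2}. The hard part, or at least the place to be most careful, will be the block-by-block evaluation of $\sum_{i,p,q} (f^{(i)}_{pq})^* f^{(i)}_{pq}$ in step (a), since that is precisely where the differing amplifications $k_i$ enter and determine the constant $c$.
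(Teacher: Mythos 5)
Your proof is correct and follows essentially the same route as the paper: the ``only if'' direction applies Lemma \ref{lemme} to compare the unitary basis with the rescaled matrix-unit basis and reads off $n_i/k_i = N/d$, and the ``if'' direction reduces to Theorem \ref{mainthm2} via a $*$-isomorphism onto $\bigoplus_i I_{n_i}\otimes M_{n_i}(\C)$ that respects the normalized trace (the paper phrases this via $r=\gcd(k_1,\dots,k_m)$, you via the explicit inner-product comparison, where in fact $cd=d'$ so your scalar is $1$). No gaps.
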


\begin{proof} 
The `if' direction follows from Theorem \ref{mainthm}, which was proved by the construction in Theorem \ref{mainthm2}. Indeed, we can let $r=\mathrm{gcd}(k_1,k_2,...,k_m)$, so that $M=I_r \otimes M_1$ where $M_1$ is a von Neumann algebra, and $N=\oplus_{i=1}^m I_{ck_i} \otimes M_{n_i}(\C)$  is of the form $I_{cr} \otimes M_1$ (and where $cr$ is an integer following from the hypothesis). So there is a normalized trace preserving *-isomorphism between $M$ and $N$, and hence $M$ has a unitary orthonormal basis since $N$ does by the theorem above.

For the `only if' direction, denote by $\{e_{ij}^{(s)}\}_{i,j=1}^{n_s}$ the set of matrix units of $ M_{n_s}(\C)$. Then $\{ \{ k_s^{-\frac{1}{2}}I_{k_s}\otimes e_{ij}^{(s)}\}_{i,j=1}^{n_s}\}_{s=1}^m $ is orthonormal under the trace inner product and 
\[
\bigoplus_{s=1}^m k_s^{-1}I_{k_s}\otimes \Big( \sum_{i,j=1}^{n_s}e_{ji}^{(s)}e_{ij}^{(s)} \Big)=\bigoplus_{s=1}^m \frac{n_s}{k_s}I_{k_s}\otimes I_{n_s}.
\]   
Hence, if $M=\bigoplus_{i=1}^m I_{k_i}\otimes M_{n_i}(\C)$ has an orthonormal basis of unitary elements with respect to the normalized trace inner product, then by Lemma~\ref{lemme}, $\bigoplus_{i=1}^m \frac{n_i}{k_i}I_{k_i}\otimes I_{n_i}$ must be a multiple of the identity. This occurs only if there exists a constant $c$ such that $n_i=c k_i$ for all $1\le i\le m$.
\end{proof}

\section{Application}

In this section we verify \cite[Conjecture 3.20]{bakshi2021few} and mention a few corollaries. Beforehand, we review some preliminary notions from subfactor theory. See \cite{js97} for details.

Let $M$ be a tracial von Neumann algebra, that is, a finite von Neumann algebra equipped with a normal faithful tracial state $\tau$, and let $N\subseteq M$ be a (unital) von Neumann subalgebra. The inclusion $N\subseteq M$ is \textit{regular} if $\mathcal{N}_M(N)''=M$, where
$$\mathcal{N}_M(N)=\{u\in\mathcal{U}(M)\mid u^*Nu=N\}$$
is the unitary normalizer of $N$ in $M$.

The GNS construction of $(M,\tau)$ yields the Hilbert space $L^2(M,\tau)$, the GNS map $\Lambda_\tau:M\rightarrow L^2(M,\tau)$ and the (faithful) representation $\pi_\tau:M\rightarrow\cl B(L^2(M,\tau))$, where
$$\pi_\tau(x)\Lambda_\tau(y)=\Lambda_\tau(xy), \ \ \ x,y\in M.$$
Let $L^2(N,\tau)=\overline{\Ltau(N)}$ be the associated closed subspace of $L^2(M,\tau)$. The orthogonal projection $e_N$ onto $L^2(N,\tau)$ induces the unique $\tau$-preserving normal faithful conditional expectation $E_N:M\rightarrow N$ via
$$e_N\Lambda_\tau(x)=\Lambda_\tau(E_N(x)),\ \ \ x\in M.$$

The von Neumann subalgebra $M_1:=\la \pi_\tau(M),e_N\ra$ of $\cl B(L^2(M,\tau))$ generated by $\pi_\tau(M)$ and $e_N$ is the result of the basic construction of the inclusion $N\subseteq M$. The algebra $M_1$ has a canonical faithful semi-finite normal trace $\mathrm{tr}_1$ determined by $\mathrm{tr}_1(xe_Ny)=\tau(xy)$, $x,y\in M$ \cite[\S 1.1.2]{p94}. The trace $\tau$ is \textit{Markov} for the inclusion $N\subseteq M$ if $\mathrm{tr}_1$ is finite, and $\tau_1:=\mathrm{tr}_1(1)^{-1}\mathrm{tr}_1$ has $\tau_1|_M=\tau$. For example, if $M\cong\bigoplus_{i=1}^m M_{n_i}(\C)$ is finite dimensional and $N=\mathbb{C}1$, then the (unique, in this case) Markov trace for $\mathbb{C}1\subseteq M$ satisfies $\tau(p_{n_i})=\frac{n_i}{\mathrm{dim}(M)}$ for all minimal projections $p_{n_i}\in M_{n_i}(\C)$ and $i=1,...,m$ (see, e.g., \cite[Proposition 2.1]{ban99}). It follows that the trace on $M$ used in Theorem \ref{mainthm} is the Markov trace for the inclusion $\mathbb{C}1\subseteq M$.

A finite subset $B=\{\lm_1,...,\lm_n\}\subseteq M$ is a (right) \textit{Pimsner-Popa basis} for $M$ over $N$ if $x=\sum_{i=1}^n\lm_iE_N(\lm_i^*x)$, $x\in M$ \cite{pp86}. When $E_N(\lm_i^*\lm_j)=\delta_{i,j}1$, $\{\lm_i\}_{i=1}^n$ is \textit{orthonormal}. Pimsner-Popa bases play an important role when studying properties of the inclusion $N\subseteq M$ and its related mathematical structures, see, e.g., \cite{b94,jp11,pp86,p94,wat90} and references therein.

Inclusions which admit a unitary Pimsner-Popa basis have been studied \cite{bg20,bakshi2021few,cs06}. Motivated by a question of Popa \cite[\S 3.5]{p19}, Bakshi and Gupta recently showed that a finite-index regular inclusion $N\subseteq M$ of $II_1$-factors for which $N'\cap M$ is commutative or simple admits an orthonormal unitary Pimsner-Popa basis \cite[Theorem 3.21]{bakshi2021few}. They conjectured that any finite-index regular inclusion of $II_1$-factors admits such a basis \cite[Conjecture 3.20]{bakshi2021few}. Combining Theorem \ref{mainthm}, which generalizes \cite[Proposition 3.23]{bakshi2021few}, with the approach of Bakshi and Gupta, we now show that the conjecture holds.

\begin{theorem}\label{bg} Let $N\subseteq M$ be a finite-index regular inclusion of $II_1$-factors. Then $M$ admits an orthonormal unitary Pimsner-Popa basis.
\end{theorem}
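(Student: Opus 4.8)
The plan is to follow the strategy of Bakshi and Gupta, the one new ingredient being Theorem \ref{mainthm}, which supplies an orthonormal unitary basis for the (finite-dimensional) relative commutant $N'\cap M$. Since $N\subseteq M$ has finite index, $K:=N'\cap M$ is a finite-dimensional von Neumann algebra, say $K\cong\bigoplus_{i=1}^m M_{n_i}(\C)$, and I would first argue that regularity forces the trace $\tau$ inherited from $M$ to be the standard (Markov) trace on $K$. I would then promote an orthonormal unitary basis of $K$ to an orthonormal unitary Pimsner-Popa basis for an intermediate inclusion, and finally compose it with a basis coming from the unitary normalizer.

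First I would pin down the trace on $K$. Each $u\in\mathcal{N}_M(N)$ satisfies $u^*(N'\cap M)u=N'\cap M$, so conjugation $\alpha_u(x)=u^*xu$ is a trace-preserving automorphism of $K$ and permutes the minimal central projections $z_1,\dots,z_m$ of $K$. If the sum $z=\sum_{z_i\in O}z_i$ over a proper orbit $O$ were a nonzero projection, then $z$ would commute with every normalizing unitary, hence with $\mathcal{N}_M(N)''=M$, forcing $z\in Z(M)=\C$, a contradiction. Thus the action is transitive, so all blocks share a common size $n_i=n$ and a common weight $\tau(z_i)=1/m$; this is precisely the standard trace on $\bigoplus_{i=1}^m M_n(\C)$. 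By Theorem \ref{mainthm} (equivalently, by the criterion of Theorem \ref{mainthm3}), $K$ therefore admits a basis $\{w_b\}_{b=1}^{\dim K}$ of unitaries that is orthonormal for $\tau|_K$.

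Next I would assemble the Pimsner-Popa basis. Since $N$ is a factor and each $w_b\in N'\cap M$, the bimodule property of $E_N$ gives $E_N(x)=\tau(x)1$ for $x\in N'\cap M$, so $E_N(w_a^*w_b)=\delta_{ab}1$; that is, $\{w_b\}$ is an orthonormal unitary Pimsner-Popa basis for the intermediate inclusion $N\subseteq P:=N\vee(N'\cap M)$. Because every $u\in\mathcal{N}_M(N)$ also normalizes $P$, the inclusion $P\subseteq M$ is again regular and finite-index, now with \emph{abelian} relative commutant $P'\cap M=Z(P)$; this is the regular, commutative-relative-commutant situation treated (for subfactors) by Bakshi and Gupta in \cite[Theorem 3.21]{bakshi2021few}, and the same construction---Fourier matrices on the now uniformly weighted center together with normalizing unitaries---produces an orthonormal unitary Pimsner-Popa basis $\{\mu_j\}$ for $P\subseteq M$. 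Using the tower identity $E_N=E_N^P\circ E_P$ one checks directly that $\{\mu_jw_b\}$ (a family of unitaries, being products of unitaries) satisfies both $E_N((\mu_jw_b)^*\mu_{j'}w_{b'})=\delta_{jj'}\delta_{bb'}1$ and the reconstruction formula $x=\sum_{j,b}\mu_jw_b\,E_N((\mu_jw_b)^*x)$, and hence is the desired orthonormal unitary Pimsner-Popa basis for $N\subseteq M$.

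The genuinely new input is Theorem \ref{mainthm}; granting it, the main obstacle is the passage through the \emph{non-factor} intermediate algebra $P$: one must verify that the outer inclusion $P\subseteq M$, whose relative commutant $P'\cap M=Z(P)$ is abelian but nontrivial, still admits an orthonormal unitary Pimsner-Popa basis. This is exactly where regularity is used twice over---first to force the standard trace on $N'\cap M$ (equivalently, uniform weights on $Z(P)$, without which even the abelian case fails by Theorem \ref{mainthm3}), and then to supply enough normalizing unitaries for the commutative-relative-commutant construction of Bakshi and Gupta to carry through.
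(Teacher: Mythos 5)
Your overall architecture is the same as the paper's (and as Bakshi--Gupta's): factor the problem through the intermediate algebra $P=N\vee(N'\cap M)$, use Theorem \ref{mainthm} to produce an orthonormal unitary basis of $N'\cap M$ over $\C$ for $\tau|_{N'\cap M}$, upgrade it to an orthonormal Pimsner--Popa basis of $P$ over $N$, and multiply by a unitary basis of $M$ over $P$ coming from the normalizer. One genuinely different (and nice) ingredient is your identification of $\tau|_{N'\cap M}$ with the Markov trace: the orbit argument --- conjugation by normalizing unitaries permutes the minimal central projections of $N'\cap M$, an invariant orbit sum is a projection commuting with $\mathcal{N}_M(N)''=M$ and hence lies in $Z(M)=\C 1$, so the action is transitive and all blocks have equal size and equal central weight --- is correct and self-contained, whereas the paper cites \cite[Lemma 3.24]{bakshi2021few} together with the scalar Watatani index from the proof of \cite[Theorem 3.12]{bg20}. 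Your argument even gives slightly more than is needed, since Theorem \ref{mainthm} does not require the blocks to have equal size.

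Two of your later steps are under-justified. First, $E_N(w_a^*w_b)=\delta_{ab}1$ gives only orthonormality of $\{w_b\}$ over $N$, not the reconstruction formula $x=\sum_b w_b E_N(w_b^*x)$ for all $x\in P$; the paper closes this by observing that $\C\subset N'\cap M$ over $N\subset P$ is a non-degenerate commuting square and invoking \cite[Lemma 3.8]{bakshi2021few}. Second, and more substantively, your treatment of $P\subseteq M$ is off-target: since $P'\cap M=Z(N'\cap M)=Z(P)$ is \emph{contained in} $P$, the expectation $E_P$ restricts to the identity there, so ``Fourier matrices on the uniformly weighted center'' contribute nothing to a basis of $M$ over $P$. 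The only nontrivial content of this step is that the regular finite-index inclusion $P\subseteq M$, with $P$ no longer a factor, admits an orthonormal \emph{unitary} basis drawn from $\mathcal{N}_M(P)\supseteq\mathcal{N}_M(N)$. You correctly flag this as the main obstacle but do not actually resolve it; the paper (following Bakshi--Gupta) resolves it by citing \cite[Proposition 3.7]{bg20}, which applies to the non-factor $P$. With that citation substituted for your final paragraph, and the commuting-square lemma inserted, your argument matches the paper's and is complete.
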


\begin{proof}  By \cite[Lemma 3.24]{bakshi2021few} the restriction of $\tau$ to $N'\cap M$ is the Markov trace for the inclusion $\mathbb{C}1\subseteq N'\cap M$. Indeed, the proof of \cite[Theorem 3.12]{bg20} shows that the Watatani index of $\tau|_{N'\cap M}$ is scalar. If $N'\cap M=\bigoplus_{i=1}^m M_{n_i}(\C)$, one then considers the canonical embedding
$$N'\cap M \ni (x_1,...,x_m)\mapsto (1_{n_1}\otimes x_1)\oplus\cdots\oplus(1_{n_m}\otimes x_m)\in M_d(\C),$$  
where $d = \mathrm{dim}(N'\cap M)$ and sees readily, in view of \cite[Corollary 2.4.3]{wat90}
and \cite[Proposition 2.1]{ban99}, that the restriction of the normalized trace of $M_d(\C)$ agrees with $\tau|_{N'\cap M}$. Hence, Theorem \ref{mainthm} implies the existence of an orthonormal Pimsner-Popa basis $B:=\{u_i\}_{i=1}^n$ of $N'\cap M$ over $\mathbb{C}$ consisting of unitaries. The result then follows from the argument of Bakshi-Gupta, which we outline for the convenience of the reader (see the proof of \cite[Theorem 3.21]{bakshi2021few}). 

First,
\begin{equation*}
\begin{tikzcd}
&N'\cap M  \arrow[r,symbol=\subset] & N\vee (N'\cap M) \\
& \mathbb{C}\arrow[u,symbol=\subset]\arrow[r,symbol=\subset] & N\arrow[u,symbol=\subset]
\end{tikzcd}
\end{equation*}
is a non-degenerate commuting square (see \cite[\S 1.1.5]{p94}), where $N\vee (N'\cap M)$ is the von Neumann algebra generated by $N$ and $N'\cap M$. Thus, by \cite[Lemma 3.8]{bakshi2021few}, $B$ is also an orthonormal Pimsner-Popa basis for $N\vee (N'\cap M)$ over $N$. By regularity, it follows from \cite[Proposition 3.7]{bg20} that $M$ admits an orthonormal unitary Pimsner-Popa basis $\{v_j\}_{j=1}^m$ over $N\vee (N'\cap M)$. The product set $\{u_iv_j\}$ then forms a orthonormal unitary Pimsner-Popa basis of $M$ over $N$ (as shown in \cite[Theorem 3.21]{bakshi2021few}).
\end{proof}

An immediate corollary of Theorem \ref{bg} is an extension of \cite[Theorem 4.3]{bakshi2021few}.

\begin{corollary} Any finite-index regular inclusion $N\subseteq M$ of $II_1$-factors has depth at most 2.
\end{corollary}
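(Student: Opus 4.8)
The plan is to deduce the corollary directly from Theorem \ref{bg} by substituting it into the argument that Bakshi and Gupta use to prove \cite[Theorem 4.3]{bakshi2021few}. Their depth-at-most-$2$ conclusion for a finite-index regular inclusion is obtained in two stages: first one produces an orthonormal \emph{unitary} Pimsner-Popa basis for $M$ over $N$, and then one shows that the existence of such a basis (with basis elements lying in the normalizer $\mathcal{N}_M(N)$) forces the derived tower of the inclusion to close up at the second step. In \cite{bakshi2021few} the first stage was available only under the extra hypothesis that $N'\cap M$ be commutative or simple; Theorem \ref{bg} removes exactly this restriction, so the corollary should follow by running the identical second stage.

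Concretely, I would proceed as follows. First, apply Theorem \ref{bg} to the regular inclusion $N\subseteq M$ to obtain an orthonormal unitary Pimsner-Popa basis $\{w_k\}_{k=1}^n$ of $M$ over $N$. Next, I would check that this basis can be taken inside $\mathcal{N}_M(N)$: in the construction underlying Theorem \ref{bg} the basis arises as a product set $\{u_iv_j\}$, where the $u_i\in N'\cap M$ automatically normalize $N$ (they commute with $N$) and the $v_j$ are supplied, via regularity, from the normalizer; since $\mathcal{N}_M(N)$ is a group under multiplication, the products again normalize $N$. This normalizing property is the crucial output of regularity. Finally, with a normalizing unitary orthonormal Pimsner-Popa basis in hand, I would pass to the basic construction $M_1=\langle\pi_\tau(M),e_N\rangle$, write $1=\sum_k w_k e_N w_k^*$, and invoke the depth-$2$ characterization used in \cite[Theorem 4.3]{bakshi2021few}, whereby a normalizing unitary basis realizes $N\subseteq M$ as a crossed-product-type (finite group- or Kac-algebra) extension, and such inclusions are known to have depth at most $2$.

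I expect the only genuine content here to have already been discharged. The subfactor-theoretic obstacle — that a normalizing unitary Pimsner-Popa basis forces depth at most $2$ — is precisely the mechanism established by Bakshi and Gupta, and it is insensitive to whether $N'\cap M$ is commutative, simple, or arbitrary. Thus the main (and essentially only) obstacle was the existence of the unitary basis in full generality, which Theorem \ref{bg} resolves; the remaining step is a black-box application of the existing argument, so the corollary is immediate once the substitution is justified. The one point warranting care is confirming that the basis produced in Theorem \ref{bg} genuinely sits in $\mathcal{N}_M(N)$, since it is this feature, rather than unitarity alone, that the depth-$2$ argument consumes.
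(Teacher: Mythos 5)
Your proposal is correct and follows exactly the paper's route: the paper's proof is the one-line observation that the argument of \cite[Theorem 4.3]{bakshi2021few} applies verbatim once Theorem \ref{bg} supplies the orthonormal unitary Pimsner-Popa basis in full generality, which is precisely the substitution you describe. Your additional care about the basis lying in $\mathcal{N}_M(N)$ is also consistent with the paper, which records that fact in the remarks following the corollary (leading to Corollary \ref{c:bg}).
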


\begin{proof} The argument of \cite[Theorem 4.3]{bakshi2021few} applies verbatim.
\end{proof}

The basis constructed in the proof of Theorem \ref{bg} lies in the normalizer of $N$ in $M$. Indeed, $\{u_i\}$ manifestly lies in $\mathcal{U}(N'\cap M)\subset\mathcal{N}_M(N)$, and by \cite[Proposition 3.7]{bg20}, $\{v_j\}$ also lies in $\mathcal{N}_M(N)$. Consequently, we can rephrase regularity of the inclusion $N\subseteq M$ as follows.

\begin{corollary}\label{c:bg} A finite-index inclusion $N\subseteq M$ of $II_1$-factors admits a (finite) orthonormal Pimsner-Popa basis in $\mathcal{N}_M(N)$ if and only if it is regular.
\end{corollary}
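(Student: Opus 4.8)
The plan is to prove Corollary~\ref{c:bg} by establishing both implications, with Theorem~\ref{bg} doing the heavy lifting for the harder direction. The statement is an ``if and only if'', so I would treat the two directions separately. The forward-looking observation I want to exploit is that a Pimsner-Popa basis lying inside the unitary normalizer $\mathcal{N}_M(N)$ is a much stronger structural feature than an arbitrary basis, and it is precisely this feature that is linked to regularity.

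For the ``if'' direction (regular $\Rightarrow$ admits such a basis), I would simply invoke Theorem~\ref{bg} together with the remark immediately preceding the corollary. The point is that Theorem~\ref{bg} already produces an orthonormal unitary Pimsner-Popa basis of $M$ over $N$, and the paragraph before the corollary observes that this particular basis lies in $\mathcal{N}_M(N)$: the factors $\{u_i\}$ sit in $\mathcal{U}(N'\cap M)\subset\mathcal{N}_M(N)$, the factors $\{v_j\}$ lie in $\mathcal{N}_M(N)$ by \cite[Proposition 3.7]{bg20}, and hence so do their products $\{u_iv_j\}$ since $\mathcal{N}_M(N)$ is a group. So this direction is essentially immediate from the work already done.

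For the ``only if'' direction (admits such a basis $\Rightarrow$ regular), I would argue that if $\{\lambda_i\}_{i=1}^n$ is an orthonormal Pimsner-Popa basis of $M$ over $N$ with each $\lambda_i\in\mathcal{N}_M(N)$, then the reconstruction identity $x=\sum_i\lambda_i E_N(\lambda_i^*x)$ for all $x\in M$ shows that $M$ is spanned (as a right $N$-module, via $E_N$) by elements of $\mathcal{N}_M(N)$. More precisely, since $\lambda_i\in\mathcal{N}_M(N)$ and $E_N(\lambda_i^* x)\in N\subseteq\mathcal{N}_M(N)''$, every $x\in M$ is a (strong-operator) limit of sums of products of normalizing unitaries, whence $M\subseteq\mathcal{N}_M(N)''$. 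The reverse containment $\mathcal{N}_M(N)''\subseteq M$ is automatic, giving $\mathcal{N}_M(N)''=M$, which is exactly the definition of regularity.

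The main obstacle I anticipate is making the spanning argument in the ``only if'' direction fully rigorous: one must check that the finitely many normalizing unitaries $\lambda_i$, together with $N$, generate all of $M$ as a von Neumann algebra, not merely that they recover each $x$ through the conditional-expectation formula. The cleanest route is to note that the Pimsner-Popa reconstruction expresses each $x\in M$ as a finite sum $\sum_i\lambda_i E_N(\lambda_i^*x)$ with $\lambda_i\in\mathcal{N}_M(N)$ and $E_N(\lambda_i^*x)\in N$, so $x$ already lies in the \emph{linear span} of $\mathcal{N}_M(N)\cdot N\subseteq\mathcal{N}_M(N)''$; this makes $M\subseteq\mathcal{N}_M(N)''$ hold on the nose, bypassing any delicate closure issues. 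I would therefore emphasize this exact-finite-sum feature rather than an approximation argument.
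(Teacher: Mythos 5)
Your proposal is correct and follows essentially the same route as the paper: the ``if'' direction cites Theorem~\ref{bg} and the remark that the constructed basis $\{u_iv_j\}$ lies in $\mathcal{N}_M(N)$, and the ``only if'' direction uses the Pimsner-Popa reconstruction identity to place every $x\in M$ in the $N$-linear span of $\mathcal{N}_M(N)$, hence in $\mathcal{N}_M(N)''$. Your extra care about the exact finite-sum feature (avoiding closure issues) is a sound elaboration of what the paper leaves implicit.
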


\begin{proof} If the inclusion is regular, then as noted above, the orthonormal basis constructed in the proof of Theorem \ref{bg} lies in the normalizer.

Conversely, if $M$ admits a (finite) Pimsner-Popa basis in $\mathcal{N}_M(N)$, then any $x\in M$ lies in the $N$-linear span of $\mathcal{N}_M(N)$, so the inclusion is regular.
\end{proof}

We remark that Pimsner-Popa bases of the form in Corollary \ref{c:bg} were recently shown in \cite{cckl} to produce unbiased teleportation schemes for tripartite quantum systems (in the commuting operator framework) arising from Jones' basic construction.

\vspace{0.1in}

{\noindent}{\it Acknowledgements.}
We are grateful to the referee for helpful comments. The first author was partially supported by the NSERC Discovery Grant RGPIN-2017-06275.  The second author was partially supported by the NSERC Discovery Grant RGPIN-2018-400160. The third author was partially supported by the NSERC Discovery Grant RGPIN-2022-04149.

\bibliographystyle{plain}

\bibliography{orthun}

\end{document}